\begin{document}
 \bibliographystyle{plain}

 \newtheorem{theorem}{Theorem}
 \newtheorem{lemma}[theorem]{Lemma}
 \newtheorem{corollary}[theorem]{Corollary}
 \newtheorem{problem}[theorem]{Problem}
 \newtheorem{conjecture}[theorem]{Conjecture}
 \newtheorem{definition}[theorem]{Definition}
 \newtheorem{prop}[theorem]{Proposition}
 \numberwithin{equation}{section}
 \numberwithin{theorem}{section}

 \newcommand{\mc}{\mathcal}
 \newcommand{\rar}{\rightarrow}
 \newcommand{\Rar}{\Rightarrow}
 \newcommand{\lar}{\leftarrow}
 \newcommand{\lrar}{\leftrightarrow}
 \newcommand{\Lrar}{\Leftrightarrow}
 \newcommand{\zpz}{\mathbb{Z}/p\mathbb{Z}}
 \newcommand{\mbb}{\mathbb}
 \newcommand{\B}{\mc{B}}
 \newcommand{\cc}{\mc{C}}
 \newcommand{\D}{\mc{D}}
 \newcommand{\E}{\mc{E}}
 \newcommand{\F}{\mc{F}}
 \newcommand{\G}{\mc{G}}
  \newcommand{\ZG}{\Z (G)}
 \newcommand{\FN}{\F_n}
 \newcommand{\I}{\mc{I}}
 \newcommand{\J}{\mc{J}}
 \newcommand{\M}{\mc{M}}
 \newcommand{\nn}{\mc{N}}
 \newcommand{\qq}{\mc{Q}}
 \newcommand{\PP}{\mc{P}}
 \newcommand{\U}{\mc{U}}
 \newcommand{\X}{\mc{X}}
 \newcommand{\Y}{\mc{Y}}
 \newcommand{\itQ}{\mc{Q}}
 \newcommand{\C}{\mathbb{C}}
 \newcommand{\R}{\mathbb{R}}
 \newcommand{\N}{\mathbb{N}}
 \newcommand{\Q}{\mathbb{Q}}
 \newcommand{\Z}{\mathbb{Z}}
 \newcommand{\A}{\mathbb{A}}
 \newcommand{\ff}{\mathfrak F}
 \newcommand{\fb}{f_{\beta}}
 \newcommand{\fg}{f_{\gamma}}
 \newcommand{\gb}{g_{\beta}}
 \newcommand{\vphi}{\varphi}
 \newcommand{\whXq}{\widehat{X}_q(0)}
 \newcommand{\Xnn}{g_{n,N}}
 \newcommand{\lf}{\left\lfloor}
 \newcommand{\rf}{\right\rfloor}
 \newcommand{\lQx}{L_Q(x)}
 \newcommand{\lQQ}{\frac{\lQx}{Q}}
 \newcommand{\rQx}{R_Q(x)}
 \newcommand{\rQQ}{\frac{\rQx}{Q}}
 \newcommand{\elQ}{\ell_Q(\alpha )}
 \newcommand{\oa}{\overline{a}}
 \newcommand{\oI}{\overline{I}}
 \newcommand{\dx}{\text{\rm d}x}
 \newcommand{\dy}{\text{\rm d}y}
\newcommand{\cal}[1]{\mathcal{#1}}
\newcommand{\cH}{{\cal H}}
\newcommand{\diam}{\operatorname{diam}}

\parskip=0.5ex

\title[Density of orbits in the Adeles]{Density of orbits of semigroups of\\endomorphisms acting on the Adeles}
\author{Alan~Haynes and Sara~Munday}
\subjclass[2010]{54H20, 11J71}
\thanks{Research of both authors supported by EPSRC grant EP/J00149X/1.}
\address{School of Mathematics, University of Bristol, Bristol, UK}
\email{alan.haynes@bristol.ac.uk, sara.munday@bris.ac.uk}

\begin{abstract}
We investigate the question of whether or not the orbit of a point in $\A/\Q$, under the natural action of a subset $\Sigma\subseteq\Q$, is dense in $\A/\Q$. We prove that if the set $\Sigma$ is a multiplicative semigroup of $\Q^\times$ which contains at least two multiplicatively independent elements, one of which is an integer, then the orbit $\Sigma\alpha$ of any point $\alpha$ with irrational real coordinate is dense.
\end{abstract}

\allowdisplaybreaks

\maketitle

\section{Introduction}
Let $S$ be a multiplicative semigroup of positive integers which contains at least two multiplicatively independent elements (i.e., elements whose logarithms are $\Q$-linearly independent). A well known theorem of Furstenberg \cite[Theorem IV.1]{Furs1967} states that if $\alpha\in\R/\Z$ is irrational then the set
\[S\alpha:=\{s\alpha : s\in S\}\subseteq\R/\Z\]
is dense in $\R/\Z$. This is a fundamental example of rigidity in dynamical systems, and it was extended by Berend in \cite{Bere1984} to study the action of $\Q$ on the Adelic quotient $\A/\Q$. Berend proved that under certain conditions on a multiplicative semigroup $\Sigma\subseteq\Q$, every point $\alpha\in\A/\Q$ with an infinite orbit under the action of $\Sigma$ turns out to have a dense orbit. Berend's hypotheses on $\Sigma$ require it to contain elements whose absolute values are greater than one, in all completions of $\Q$ (Archimedean and non-Archimedean).

In this paper we study semigroups of $\Q$ which are generated by only two elements. In this case the dichotomy of finite and dense orbits is no longer present (see Section \ref{sec.non dense}). Our main result is the following theorem.
\begin{theorem}\label{thm.dense orbits}
Suppose that $\gamma\in\Q$ and $\delta\in\Z$ are multiplicatively independent and let
\[\Sigma=\Sigma(\gamma,\delta):=\{\gamma^a\delta^b:a,b\in\N\}.\]
Then for any $\alpha\in\A/\Q$ with $\alpha_\infty\not\in\Q$ the set
\[\Sigma\alpha:=\{\sigma\alpha:\sigma\in\Sigma\}\subseteq\A/\Q,\]
where $\sigma\alpha$ denotes the natural action of $\Q$ on $\A/\Q$, is dense in $\A/\Q$.
\end{theorem}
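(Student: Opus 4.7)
It suffices to verify that $\Sigma\alpha$ meets every non-empty basic open neighborhood of $\A/\Q$. Via the standard fundamental domain $[0,1)\times\prod_p\Z_p$, such a neighborhood of a target adele $\beta$ is prescribed by an $\epsilon$-interval around $\beta_\infty$ in $\R/\Z$ together with congruence conditions $\beta_p\pmod{p^{k_p}\Z_p}$ at finitely many primes $p\in T$. Writing $\gamma=m/n$ in lowest terms, I would enlarge $T$, if needed, to contain every prime dividing $mn\delta$. For each prime $p\notin T$ the element $\gamma^a\delta^b$ is a $p$-adic unit, so multiplication by it is an isometry of $\Z_p$, and the $p$-adic condition at such a prime is automatically controlled once one works in the fundamental domain.

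The matching at the primes $p\in T$ would then be reformulated as a system of simultaneous conditions on the exponent pair $(a,b)$. At each such $p$, the $p$-adic valuation of $\sigma=\gamma^a\delta^b$ is a linear function of $a$ and $b$, while its unit part is a homomorphic image of $\N^2$ inside the finite abelian group $(\Z/p^{k_p}\Z)^\times$. Applying the Chinese remainder theorem, and taking $a$ and $b$ large enough to ensure that the $p$-adic valuations fall in the correct ranges (so that $\sigma\alpha_p$ represents the desired class after subtracting a suitable rational), I would show that the admissible set of pairs contains a two-dimensional shifted arithmetic progression of the form $(a_0,b_0)+M\N^2$ for some modulus $M\geq 1$.

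It remains to select $(a,b)$ from this progression so that $\gamma^a\delta^b\alpha_\infty$ lies within $\epsilon$ of $\beta_\infty$ in $\R/\Z$. This is the main obstacle. Furstenberg's classical theorem provides density of orbits when the underlying semigroup consists of integers, whereas here $\gamma$ is only rational and multiplication by $\gamma$ is not a well-defined self-map of $\R/\Z$. The hypothesis $\delta\in\Z$ is crucial, since it provides a genuine integer endomorphism of the circle. My strategy would be to convert the problem into one about multiplicatively independent integers acting on $\R/\Z$: in the favourable case that every prime dividing $n$ also divides $\delta$, one can restrict to pairs $(a,b)$ for which the denominator $n^a$ is cleared by $\delta^b$, producing two multiplicatively independent integer elements to which Furstenberg's theorem applies directly; in general, one exploits the full adelic coupling to absorb the rational denominator into the $p$-adic coordinates, ultimately reducing to an integer-semigroup action on $\R/\Z$ and invoking Furstenberg's theorem, relying on the irrationality of $\alpha_\infty$ to guarantee the required density of the resulting integer orbit.
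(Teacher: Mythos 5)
Your reduction breaks down at the finite places, and the failure is structural rather than technical. You propose to satisfy the congruence conditions $\sigma\alpha_p\equiv\beta_p\pmod{p^{k_p}}$ for $p\in T$ by choosing the exponent pair $(a,b)$ appropriately, claiming the admissible pairs contain a shifted progression $(a_0,b_0)+M\N^2$. But for a prime $p\in T$ not dividing $mn\delta$, the element $\sigma=\gamma^a\delta^b$ is a unit in $\Z_p$, so $\sigma\alpha_p$ ranges only over the coset $\alpha_p\cdot\langle\gamma,\delta\rangle$ of the subgroup of $(\Z/p^{k_p}\Z)^\times$ generated by the images of $\gamma$ and $\delta$. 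If, say, $\gamma\equiv\delta\equiv 1\pmod{p^{k_p}}$ and $\beta_p\not\equiv\alpha_p$, then \emph{no} choice of exponents works: the admissible set is empty, not a progression. The multiplicative action alone can never reach most residue classes (it cannot change the $p$-adic valuation of $\alpha_p$, nor leave the coset above). What actually produces density at the finite places is the additive coupling you set aside: the coordinate of $\sigma\alpha$ in the fundamental domain at a prime $p$ is $\sigma\alpha_p-\eta_\sigma$, where $\eta_\sigma$ is the rational (for integral $\sigma$, the integer $\lfloor\sigma\alpha_\infty\rfloor$) subtracted to bring the Archimedean coordinate back into $[0,1)$. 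It is precisely because $\A/\Q$ is \emph{not} the direct product $\R/\Z\times\prod_p\Z_p$ that these wrap-around translations sweep through all residues mod $p^{k_p}$ as the real coordinate climbs past the integers $1,\dots,N$; this is the content of Proposition \ref{prop.eps dense in A/Q} and the heart of the paper's argument.

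The Archimedean step has a related gap: Furstenberg's theorem gives density of the full integer orbit in $\R/\Z$, but you need simultaneous control --- an element $\sigma$ whose real coordinate lands in a prescribed $\epsilon$-interval \emph{and} whose accumulated integer translation lies in a prescribed class mod $N$. Citing Furstenberg as a black box cannot deliver this joint statement, and the vague appeal to ``adelic coupling'' is exactly where the real work lies. The paper instead adapts Furstenberg's proof rather than his theorem: it first shows (by the Boshernitzan-style induction on the sets $Y_i$ and their difference sets) that the projected orbit has a rational accumulation point $\iota(s/t)$ invariant under suitable powers of $\gamma$ and $\delta$, and then, in Proposition \ref{prop.rational accum pt 1}, multiplies a nearby orbit point by powers of an element $\sigma=\gamma^a\delta^{-b}$ with $1<\sigma<1+1/(LMN)$, so that the real coordinate advances in steps of size less than $1/(LM)$ up to a large integer $N$ while the induced translations $n_i\in\{1,\dots,N\}$ realize every residue mod $p^{k_p}$ by the Chinese Remainder Theorem. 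Without an argument of this kind your proposal cannot close.
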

The paper is organized as follows. In Section \ref{sec.prelim} we give a basic background about the Adeles and a metric which generates the topology on $\A/\Q$. In Section \ref{sec.main proof} we give the proof of our main theorem, and in Section \ref{sec.non dense} we give several examples of non-dense orbits for the actions of semigroups of $\Q$ on $\A/\Q$. The reader who is new to these ideas is encouraged to read the final section before the proof of the main result.

Acknowledgements: We would like to thank Tom Ward and Manfred Einsiedler for helpful comments regarding an earlier draft of this paper.

\section{Preliminaries}\label{sec.prelim}
We use $\A$ to denote the rational Adeles with the usual restricted product topology, under which it is a locally compact Abelian group (we refer the reader to \cite[Chapter 5]{Rama1999} for a basic treatment of the Adeles). The group $\Q$ diagonally embeds in $\A$, and we also denote the image of this embedding as $\Q$. This slight ambiguity in notation should cause no confusion in what follows. It is well known that $\Q$ is a discrete subgroup of $\A$ and that the quotient $\A/\Q$ is compact. Our canonical choice of fundamental domain for this quotient will be
\[\F:=[0,1)\times\prod_p\Z_p.\]
We will view $\F$ as a topological space, with a base for its topology being given by the following two types of sets:
 \begin{enumerate}
 \item[(i)] Sets of the form \[U_\infty\times\prod_pU_p,\] where each $U_p\subseteq\Z_p$ is an open ball, $U_\infty\subseteq (0,1)$ is an open interval, and $U_p=\Z_p$ for all but finitely many primes.\vspace*{.1in}
 \item[(ii)] Sets of the form
 \[\left((\alpha,1)\times \prod_{p}U_p\right)\cup\left([0,\beta)\times \prod_{p}(U_p-1)\right),\]
 where $\alpha,\beta\in (0,1)$, each $U_p\subseteq\Z_p$ is an open ball, and $U_p=\Z_p$ for all but finitely many primes.
 \end{enumerate}
The reason for this choice is made clear by the following proposition.
\begin{prop}
As a topological space $\A/\Q$ is homeomorphic to $\F$.
\end{prop}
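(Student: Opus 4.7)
My plan is to exhibit the map $\iota:\F\to\A/\Q$ defined as the composition of the inclusion $\F\hookrightarrow\A$ with the quotient projection $\pi:\A\to\A/\Q$, and show that it is a continuous, open bijection with respect to the topology on $\F$ generated by the basic sets of types (i) and (ii).

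First I would verify that $\iota$ is a bijection, which is the classical statement that $\F$ is a strict fundamental domain for the translation action of $\Q$ on $\A$. Surjectivity uses strong approximation: given an adele $x$, only finitely many of its components $x_p$ lie outside $\Z_p$, and for each such $p$ one extracts the $p$-adic fractional part $r_p\in\Z[1/p]\cap[0,1)$ so that $x_p-r_p\in\Z_p$. Subtracting the diagonally embedded rational $\sum_p r_p$ places all finite coordinates in $\Z_p$, after which an integer translation places the real coordinate in $[0,1)$. For injectivity, if two elements of $\F$ differ by $q\in\Q$, then $q\in\Z_p$ for every $p$ forces $q\in\Z$, and the constraint that both real coordinates lie in $[0,1)$ then forces $q=0$.

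Next I would prove openness of $\iota$. For a basic set $T\subseteq\F$ of type (i) this is immediate, since $T$ is already open in $\A$ and $\pi$ is an open map. For a basic set
\[T=\bigl((\alpha,1)\times\textstyle\prod_p U_p\bigr)\cup\bigl([0,\beta)\times\prod_p(U_p-1)\bigr)\]
of type (ii), the key observation is that translating the second piece by the diagonally embedded rational $1$ yields $[1,1+\beta)\times\prod_p U_p$, so that $T\cup(T+1)$, viewed inside $\A$, contains the adelic open set $(\alpha,1+\beta)\times\prod_p U_p$. Therefore $\pi^{-1}(\pi(T))$ contains an adelic open neighborhood of every point of $T$, which forces $\pi(T)=\iota(T)$ to be open in $\A/\Q$.

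For continuity, I would check that for any adelic basic open set $V\subseteq\A$ and any $f\in\F\cap\pi^{-1}(\pi(V))$, the set $\iota^{-1}(\pi(V))$ contains a basic neighborhood of $f$. When $f_\infty\in(0,1)$, an adelic neighborhood of $f$ inside $V$ can be shrunk so as to lie entirely in $\F$, producing a type (i) set. When $f_\infty=0$, the point $f$ is identified modulo $\Q$ with $f+1\in\A$, whose real coordinate equals $1$; pulling back a small adelic neighborhood of $f+1$ inside $V+1$ and reducing modulo the integer $1$ produces exactly a type (ii) neighborhood of $f$, because the wrap-around across the real interval is matched by the $p$-adic shift $U_p\mapsto U_p-1$. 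The main obstacle throughout is this bookkeeping at points with $f_\infty=0$: one must track the simultaneous integer shift in the Archimedean and all non-Archimedean coordinates, which is precisely what the type (ii) sets are engineered to encode. Once this is in hand, both continuity and openness reduce to the fact that $\pi$ is a continuous open surjection and $\F$ meets each coset of $\Q$ in exactly one point.
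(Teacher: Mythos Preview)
Your proposal is correct and follows essentially the same route as the paper. Both arguments hinge on the same two computations: that translating the second piece of a type~(ii) set by the diagonal rational $1$ reassembles it into the adelic open box $(\alpha,1+\beta)\times\prod_p U_p$ (yielding openness), and that conversely any adelic basic box of real length $<1$, after a rational translate into $\F$, becomes a type~(i) or type~(ii) set (yielding continuity). The only cosmetic differences are that you phrase the result as ``continuous open bijection'' while the paper phrases it as ``same base for the topology,'' and that you spell out the strong-approximation argument for bijectivity which the paper takes for granted.
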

\begin{proof}
Call the two types of sets in the basis above sets of types (i) and (ii). First we show that both types of sets are open in the quotient topology on $\A/\Q$. Let $\phi:\A\rar\A/\Q$ be the quotient map, and suppose that $X\subseteq \F$ is a set of type (i). Then
\[\phi^{-1}\left(X\right)=\bigcup_{\eta\in\Q}(X+\eta),\]
and for any $\eta\in\Q$ we have that $U_p+\eta=\Z_p$ for all but finitely many primes. Therefore each set $X+\eta$ is open in $\A$ and we conclude that $X$ is open in $\A/\Q$.

Similarly, if $X\subseteq\F$ is a set of type (ii) then write $X=A\cup B$ with
\[A=\left((\alpha,1)\times \prod_{p}U_p\right)~\text{ and }~B=\left([0,\beta)\times \prod_{p}(U_p-1)\right),\]
and note that
\begin{align*}
\phi^{-1}\left(X\right)&=\bigcup_{\eta\in\Q}((A+\eta)\cup (B+(\eta+1))\\
&=\bigcup_{\eta\in\Q}\left(\left((\alpha,1+\beta)\times \prod_{p}U_p\right)+\eta\right)
\end{align*}
is open in $\A$.

Next we show that any open set in $\A/\Q$ can be written as a union of sets of types (i) and (ii). If $X\subseteq\A/\Q$ is open then we can write
\[\phi^{-1}(X)=\bigcup_{i\in I}Y_i,\]
where each $Y_i\subseteq\A$ is a set of the form
\[Y_i=V_{i,\infty}\times\prod_pV_{i,p},\]
with each $V_{i,p}\subseteq\Q_p$ an open ball, $V_{i,p}=\Z_p$ for all but finitely many primes, and $V_{i,\infty}\subseteq\R$ an open interval of length less than one. Then we have that
\[X=\bigcup_{i\in I}\phi (Y_i),\]
and each of the sets in this union, viewed as a subset of $\F$, is a set of type (i) or (ii).
\end{proof}
As a word of warning we point out that, although their topological structure is similar, $\A/\Q$ is \emph{not} isomorphic as a group to the direct product
\[\R/\Z\times\prod_{p}\Z_p.\]
This fact has already been featured in the proof above, and indeed the group structure is reflected by the shapes of the sets of type (ii).

For convenience we will work with a metric on $\A/\Q$ which induces its topology. We define $d:\F\times\F\rar [0,\infty)$ by
\begin{equation*}
d(\alpha,\beta)=\min_{\eta\in\{0,\pm 1\}}\max\left(|\alpha_\infty-\beta_\infty-\eta|_\infty,\max_p\left(\frac{|\alpha_p-\beta_p-\eta|_p}{p}\right)\right).
\end{equation*}
The function $d$ defines a metric on $\F$ and the metric topology is the same as the quotient topology on $\A/\Q$, since it is precisely that generated by the type (i) and (ii) sets above (see also \cite[Proposition 3.3]{TorbZuni2012}). We will use the following proposition to quantify the density of the integers at the finite places of $\A/\Q$.
\begin{prop}\label{prop.eps dense in A/Q}
Let $\varepsilon>0$ and suppose that $X\subseteq [0,1)$ is an $\varepsilon-$dense subset of $[0,1)$. For each prime $p$ let
\begin{equation}\label{eqn.k_p def}
k_p=k_p(\varepsilon):=\max\left(0,\lceil-\log_p\varepsilon\rceil-1\right),
\end{equation}
and set
\begin{equation}\label{eqn.N eps def}
N=N(\varepsilon):=\prod_{p}p^{k_p}.
\end{equation}
Then, with respect to the metric $d$, the set
\[\{(x;n,n,n,\ldots):x\in X, 1\le n\le N\}\subseteq\F\]
is an $\varepsilon-$dense subset of $\A/\Q$.
\end{prop}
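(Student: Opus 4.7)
Let $\alpha=(\alpha_\infty;\alpha_2,\alpha_3,\ldots)\in\F$ be arbitrary. The aim is to produce $x\in X$ and $n\in\{1,\ldots,N\}$ with $d(\alpha,(x;n,n,\ldots))\le\varepsilon$; the plan is to take $\eta=0$ in the minimum defining $d$ and control each coordinate separately.

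For the real coordinate, $X$ is $\varepsilon$-dense in $[0,1)$, so I would just pick any $x\in X$ with $|\alpha_\infty-x|_\infty\le\varepsilon$. For the finite coordinates, my plan is to invoke the Chinese Remainder Theorem. The key observation is that $k_p>0$ precisely for primes $p<1/\varepsilon$, so only finitely many primes contribute to $N$. For each such prime $p$, let $a_p\in\{0,1,\ldots,p^{k_p}-1\}$ denote the reduction of $\alpha_p\in\Z_p$ modulo $p^{k_p}$; since $\{1,\ldots,N\}$ is a complete residue system modulo $N$, CRT should furnish an $n$ in this range satisfying $n\equiv a_p\pmod{p^{k_p}}$ for every $p$ with $k_p>0$.

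To finish, I would verify the bound on $d$ coordinatewise. The real coordinate contributes at most $\varepsilon$ by construction. For primes $p$ with $k_p>0$, the congruence $n\equiv\alpha_p\pmod{p^{k_p}}$ in $\Z_p$ gives $|\alpha_p-n|_p\le p^{-k_p}$, and the definition \eqref{eqn.k_p def} is tailored so that $p^{-k_p-1}\le\varepsilon$, delivering $|\alpha_p-n|_p/p\le\varepsilon$. For primes $p$ with $k_p=0$ one has $p\ge 1/\varepsilon$, so $|\alpha_p-n|_p/p\le 1/p\le\varepsilon$ trivially. The only delicate piece is the routine arithmetic confirming that \eqref{eqn.k_p def} does yield $p^{-k_p-1}\le\varepsilon$, which is a direct unwinding of the ceiling function; this is the main (quite minor) technical bookkeeping, and the rest of the argument is essentially CRT plus the hypothesis on $X$.
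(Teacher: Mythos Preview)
Your proposal is correct and follows essentially the same route as the paper's own proof: choose $x\in X$ by $\varepsilon$-density, choose $n$ via the Chinese Remainder Theorem (noting that only finitely many primes have $k_p>0$), and then verify the metric bound coordinatewise with $\eta=0$. The only cosmetic difference is that the paper first reduces to $\varepsilon<1/2$ using that $\diam(\A/\Q)=1/2$, whereas your argument handles all $\varepsilon>0$ directly; both are fine.
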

\begin{proof}
The diameter of the space $\A/\Q$ is $1/2$, so we may assume that $\varepsilon<1/2$. For any $\alpha\in\F$ we can clearly choose $x\in X$ so that $|\alpha_\infty-x|_\infty\le \varepsilon$. Furthermore by the Chinese Remainder Theorem we can choose an integer $1\le n\le N$ so that
\[n=\alpha_p~\mathrm{mod}~ p^{k_p}\]
for all primes $p$ (note that $k_p=0$ for all primes $p\ge 1/\varepsilon$). Then by the definition of $d$ we have that
\begin{align*}
d(\alpha,(x;n,n,n,\ldots)) &= \max\left(|\alpha_\infty-x|_\infty~,~\max_p\frac{|\alpha_p-n|_p}{p}\right)\\
&\le \max\left(\varepsilon~,~\max_p p^{-k_p-1}\right).
\end{align*}
By the choice of $k_p$ this is less than or equal to $\varepsilon$.
\end{proof}

\section{Proof of Theorem \ref{thm.dense orbits}}\label{sec.main proof}
The outline of our proof of Theorem \ref{thm.dense orbits} is based on Furstenberg's original proof of his orbit closure theorem \cite{Furs1967} and also on Boshernitzan's exposition of this result \cite{Bosh1994}. However there are many features of our proof which are new, which do not appear when one is considering the group $\R/\Z$.

To begin, by replacing $\gamma$ with $\gamma^2$ and $\delta$ with $\delta^2$ if necessary, we assume without loss of generality that $\gamma>0$ and $\delta>1$. We first have the following proposition.
\begin{prop}\label{prop.close to 1}
For any $\varepsilon>0$ there exist $a\in\N$ and $b\in\Z$ with
\begin{equation}\label{eqn.sigma ineqs}
1<\frac{\gamma^a}{\delta^b}<1+\varepsilon.
\end{equation}
\end{prop}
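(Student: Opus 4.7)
The plan is to take logarithms and reduce the claim to a classical one-dimensional density statement. Since $\delta>1$, the double inequality (\ref{eqn.sigma ineqs}) is equivalent, after taking $\log$ and dividing by $\log\delta>0$, to the existence of $a\in\N$ and $b\in\Z$ with
$$0 < a\theta - b < \frac{\log(1+\varepsilon)}{\log\delta}, \qquad \theta := \frac{\log\gamma}{\log\delta}.$$
It therefore suffices to produce such a pair $(a,b)$.

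The key observation is that $\theta$ is irrational. If instead $\theta=m/n$ with $n\in\N$ and $m\in\Z$, then $\gamma^n=\delta^m$, contradicting the multiplicative independence of $\gamma$ and $\delta$ (note $\gamma\neq 1$, for otherwise $\{\gamma,\delta\}$ would already be multiplicatively dependent via $\gamma^1\delta^0=1$).

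Given irrationality, I would invoke Weyl equidistribution (or just Kronecker's density theorem) to conclude that $\{a\theta\bmod 1:a\in\N\}$ is dense in $[0,1)$. Choose $a\in\N$ whose fractional part satisfies $\{a\theta\}<\log(1+\varepsilon)/\log\delta$; positivity $\{a\theta\}>0$ is automatic because $\theta\notin\Q$. Setting $b:=\lfloor a\theta\rfloor\in\Z$ then gives $a\theta-b=\{a\theta\}$, which lies in the required open interval. The same formula works uniformly regardless of the sign of $\theta$: when $0<\gamma<1$ forces $\theta<0$, the integer $b=\lfloor a\theta\rfloor$ is simply negative, which is consistent with the hypothesis $b\in\Z$.

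I do not anticipate any serious obstacle; the whole content of the proposition is the translation to fractional parts plus the observation that multiplicative independence is exactly the hypothesis that makes $\theta$ irrational. The only mild points of care are the strict lower bound (which comes for free from irrationality) and remembering not to restrict $b$ to $\N$.
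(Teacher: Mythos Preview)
Your proposal is correct and essentially the same as the paper's argument: both take logarithms to reduce (\ref{eqn.sigma ineqs}) to a one-dimensional Diophantine approximation problem, using that multiplicative independence of $\gamma,\delta$ forces the ratio of logarithms to be irrational. The only cosmetic difference is that the paper appeals to odd-indexed continued-fraction convergents of $\log\delta/\log\gamma$ (which approximate from the correct side), whereas you invoke Kronecker/Weyl density for $a\theta\bmod 1$ with $\theta=\log\gamma/\log\delta$; these are interchangeable tools here.
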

\begin{proof}
Taking logarithms in (\ref{eqn.sigma ineqs}) and dividing by $b\log \gamma$, this is equivalent to the statement that, for any $\varepsilon'>0$, there are $a\in\N$ and $b\in\Z$ with
\[0<\frac{a}{b}-\frac{\log \delta}{\log \gamma}<\frac{\varepsilon'}{b}.\]
Now we can take $a/b=p_{2n+1}/q_{2n+1}$ to be the $(2n+1)^{st}$ convergent in the continued fraction expansion of $\log\delta/\log\gamma$, with $n$ chosen large enough that $q_{2n+1}^{-1}<\varepsilon'.$ If $\gamma<1$ then we choose $a>0, b<0$, while if $\gamma>1$ we choose $a,b>0$.
\end{proof}
Now let $\mathcal{P}$ be the collection of primes $p$ for which $|\gamma|_p>1$, define
\begin{equation}\label{eqn.P defn}
P=\prod_{p\in\mathcal{P}}|\gamma|_p,
\end{equation}
and write $P=p_1^{a_1}\cdots p_m^{a_m}$ with $p_1,\ldots , p_m$ distinct primes. Let
\[X_\mathcal{P}=\left.\left(\R\times\prod_{i=1}^m\Q_{p_i}\right)\right/\iota (\Z[1/(p_1\cdots p_m)]),\]
where
\[\iota:\Z[1/(p_1\cdots p_m)]\rightarrow\R\times\prod_{i=1}^m\Q_{p_i}\]
denotes the diagonal embedding, and let
\[\pi:\A/\Q\rightarrow X_\mathcal{P}\]
be the map obtained by projection of $\F$ onto $X_\mathcal{P}$. We take $X_\mathcal{P}$ with its usual quotient topology and note that the map $\pi$ is continuous. Furthermore the elements of $\Sigma$ act both on $\A/\Q$ and on $X_\mathcal{P}$ as endomorphisms, and these actions commute with $\pi$. We have the following proposition.
\begin{prop}\label{prop.rational accum pt 1}
If $\alpha\in\A/\Q$ satisfies $\alpha_\infty\not\in\Q$, and if there exists an $s/t\in\Q$ such that $\iota (s/t)$ is an accumulation point of the set $\pi(\Sigma \alpha)$, then $\Sigma\alpha$ is dense in $\A/\Q$.
\end{prop}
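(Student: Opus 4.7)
The plan is to deduce density of $\Sigma\alpha$ in $\A/\Q$ by first upgrading the accumulation-point hypothesis to density of $\pi(\Sigma\alpha)$ in $X_\mathcal{P}$ via the $\Sigma$-action, then lifting back through Proposition~\ref{prop.eps dense in A/Q}. The underlying structural fact that makes this possible is that $\pi$ is a continuous group homomorphism which also intertwines the two $\Sigma$-actions: reducing a sum or product back into the fundamental domain $\F$ only ever requires subtracting a rational whose denominators are supported on $\mathcal{P}$, and such rationals lie in $\Z[1/(p_1\cdots p_m)]$ and hence vanish in $X_\mathcal{P}$. As a consequence, the set of accumulation points of $\pi(\Sigma\alpha)$ is a closed $\Sigma$-invariant subset of $X_\mathcal{P}$, and by hypothesis it contains $\overline{\iota(\Sigma\cdot s/t)}$.

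The first substantive step is to show $\overline{\iota(\Sigma\cdot s/t)}=X_\mathcal{P}$. Proposition~\ref{prop.close to 1} is the crucial tool here: it produces $\gamma^a/\delta^b$ arbitrarily close to $1$, which lets one compare pairs of orbit elements $\gamma^a\sigma(s/t)$ and $\delta^b\sigma(s/t)$ whose Archimedean difference can be made arbitrarily small while the $p_i$-adic valuations diverge. Combined with the cocompactness of $\iota(\Z[1/(p_1\cdots p_m)])$ in $\R\times\prod_{i}\Q_{p_i}$, such comparisons can be translated across $X_\mathcal{P}$ by further $\Sigma$-multiplication to give density.

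The second substantive step is the lift from $X_\mathcal{P}$-density to $\A/\Q$-density. By Proposition~\ref{prop.eps dense in A/Q}, $\Sigma\alpha$ is $\varepsilon$-dense in $\A/\Q$ as soon as it $\varepsilon$-approximates every diagonal point $(x;n,n,\ldots)\in\F$. Given such a target, density of $\pi(\Sigma\alpha)$ in $X_\mathcal{P}$ yields $\sigma\in\Sigma$ matching the Archimedean and $\mathcal{P}$-adic coordinates; at a prime $p\notin\mathcal{P}$ one has $|\sigma|_p\le 1$, and a Chinese Remainder Theorem argument, made possible by varying the $\delta$-exponent, matches the residues $\sigma\alpha_p\pmod{p^{k_p}}$ at the finitely many relevant primes.

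The main obstacle is the first substantive step. Because $X_\mathcal{P}$ is not a direct product of its local factors (the embedding $\iota$ entangles the Archimedean and $\mathcal{P}$-adic components), one cannot reduce to place-by-place Furstenberg-style analysis; the two-generator, non-lacunary geometry supplied by Proposition~\ref{prop.close to 1}, together with the hypothesis $\alpha_\infty\notin\Q$ ensuring that the orbit is not trapped in a rational subset, must be combined to obtain simultaneous fine control at every place of $X_\mathcal{P}$.
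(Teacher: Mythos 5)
Your first substantive step rests on a false claim. The point $\iota(s/t)$ is a torsion element of $X_\mathcal{P}$: since $t\cdot(s/t)=s\in\Z\subseteq\Z[1/(p_1\cdots p_m)]$, we have $t\cdot\iota(s/t)=0$, and because every $\sigma\in\Sigma$ acts on $X_\mathcal{P}$ as a group endomorphism, the whole orbit $\Sigma\cdot\iota(s/t)$ stays inside the subgroup $\{x\in X_\mathcal{P}:tx=0\}$, which is isomorphic to $\tfrac{1}{t}\Z[1/(p_1\cdots p_m)]/\Z[1/(p_1\cdots p_m)]$ and hence finite. So $\overline{\iota(\Sigma\cdot s/t)}$ is a finite set, never equal to $X_\mathcal{P}$, and knowing that the (closed, $\Sigma$-invariant) set of accumulation points of $\pi(\Sigma\alpha)$ contains this finite set gives you essentially nothing. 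The paper uses the rationality of $s/t$ in exactly the opposite way: it replaces $s/t$ by an element of its (finite) orbit that is \emph{fixed} by suitable powers of $\gamma$ and $\delta$ (via Euler's theorem and purely periodic $p$-adic expansions), so that multiplying a nearby orbit point $y=s/t+x$ by elements $\sigma^i\gamma_0$ of $\Sigma$ moves only the small error term $x$ while the base point stands still. All of the density then has to be extracted from the error terms of points of $\Sigma\alpha$ accumulating at $\iota(s/t)$, not from the orbit of $s/t$ itself.

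The second step also conceals the real difficulty. Density of $\pi(\Sigma\alpha)$ in $X_\mathcal{P}$ says nothing about the coordinates $\sigma\alpha_p$ for $p\notin\mathcal{P}$, and your proposal to first choose $\sigma$ matching the Archimedean and $\mathcal{P}$-adic data and then ``vary the $\delta$-exponent'' to fix residues modulo $p^{k_p}$ at the remaining primes cannot work: multiplying by further powers of $\delta$ destroys the Archimedean approximation, the two constraints cannot be decoupled, and in any case the multiplicative group generated by $\delta$ modulo $p^{k_p}$ need not act transitively on residues. The paper's mechanism is different and is the heart of the proof: starting from a single $y$ with $\pi(y)$ near the now $\Sigma$-fixed $\iota(s/t)$ and with $0<|y_\infty-s/t|_\infty$ bounded \emph{below} (this is precisely where $\alpha_\infty\notin\Q$ is used), it lets $\sigma^i x_\infty$ sweep from near $0$ up to $N=\prod_p p^{k_p}$ in steps of size less than $1/(LM)$. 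The integers $n_i$ subtracted when translating back into the fundamental domain then run through all residues modulo $N$, which by the Chinese Remainder Theorem realizes every residue at every prime $p\notin\mathcal{P}$ with $p\le M$, while restricting to indices $i\equiv 0\ (\mathrm{mod}\ L)$ forces $\sigma^i\equiv 1\ (\mathrm{mod}\ p^{k_p})$ and neutralizes the multiplicative distortion at those primes. Neither the fixed-point reduction nor this additive ``winding'' device appears in your outline, and without them the argument does not close.
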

\begin{proof}
We first claim that we can replace $\iota (s/t)$ with an accumulation point satisfying the same hypotheses, which is also invariant under the action of suitable powers of $\gamma$ and $\delta$. If it happens that $s/t=0$ then this claim is trivially satisfied, so let us assume temporarily that $s/t\not=0.$

Write $\gamma=r/P$ with $(r,P)=1$. We can assume that $s/t$ lies in the fundamental domain
\begin{equation}\label{eqn.fund dom 2}
(-1,0]\times\prod_{p\in\mathcal{P}}\Z_p,
\end{equation}
and we can also assume (by multiplying by an element of $\Sigma$ if necessary), that $(r,t)=(\delta,t)=1.$ Here and throughout the proof we are using the fact, mentioned before the proposition, that
\[\pi (\sigma\alpha)=\sigma\pi (\alpha), ~\text{for any}~ \alpha\in\A/\Q ~\text{and}~ \sigma\in\Sigma.\]
We can view $s/t$ as an element of the ring
\[\lim_\leftarrow\Z/P^i\Z,\]
with an eventually periodic expansion. For each $i\in\N$ choose $0\le A_i<P^i$ with $s/t=A_i~\mathrm{mod}~P^i$. Then there is an $I\in\N$ such that for all $i>I$, the rational number
\begin{equation}\label{eqn.periodic expan}
\xi_i=P^{-i}\left(\frac{s}{t}-A_i\right)=\frac{s}{P^it}-\frac{A_i}{P^i}\in\Z_p
\end{equation}
has a purely periodic $p$-adic expansion, for all primes $p\in\mathcal{P}$. We may assume that $I$ is large enough so that $\xi_i\in (-1,0)$ for all $i>I$. Now choose $\mu >I$ such that $(r\delta)^\mu=1~\mathrm{mod}~t$, and let $\beta'=(r\delta)^\mu(s/t)$ and $\beta''=(\gamma\delta)^\mu(s/t)$. Then, choosing a representative in (\ref{eqn.fund dom 2}) for $\beta'$, we have that
\[\beta'_\infty=\frac{(r\delta)^\mu s}{t}-\left(1+\left\lfloor\frac{(r\delta)^\mu s}{t}\right\rfloor\right)=\frac{s}{t},\]
and therefore also that $\beta'_p=s/t$ for all $p\in\mathcal{P}.$ Since $\beta''=P^{-\mu}\beta'$, we then have that $\beta''_\infty=\xi_\mu$ and that $\beta''_p=\xi_\mu$ for all $p\in\mathcal{P}.$ Here our representative for $\beta''$ is determined uniquely by the requirements that $\beta''_\infty\in (-1,0]$ and $\beta''_p\in\Z_p$ for all $p$. Therefore, for the remainder of this proof we will assume (by replacing $s/t$ with $\xi_\mu$) that the hypotheses of this proposition are satisfied for a fraction $s/t\in (-1,0)$ with $(r\delta,t)=1$, with $(p,t)=1$ for all $p\in\mathcal{P}$, and with the property that the $p$-adic expansion of $s/t$ is purely periodic, for all $p\in\mathcal{P}.$

Let $\nu\in\N$ be the least common multiple of $\varphi (t)$ and all of the period lengths of $s/t$ modulo primes $p\in\mathcal{P}$. Set $\beta^{(0)}=r^\nu(s/t)$, and for each $1\le i\le m$ set $\beta^{(i)}=p_i^{-\nu a_i}\beta^{(i-1)}$. We will work with representatives for these elements which lie in the fundamental domain (\ref{eqn.fund dom 2}).

First of, as in the argument above, notice that
\[\beta_\infty^{(0)}=\frac{r^\nu s}{t}-\left(1+\left\lfloor\frac{r^\nu s}{t}\right\rfloor\right)=\frac{s}{t},\]
and therefore that $\beta^{(0)}_p=s/t$ for all $p\in\mathcal{P}$. Similarly for each $1\le i\le m$ we have that
\[\beta^{(i)}_{p_i}=\frac{s}{t},\]
because the $p_i$-adic expansion of $s/t$ is periodic with period dividing $\nu$. Therefore we have that $\beta^{(i)}_{\infty}=s/t$ and $\beta^{(i)}_{p}=s/t$ for all $p\in\mathcal{P}$ and for $1\le i\le m$. In particular, this is true for $i=m$, when $\beta^{(m)}=\gamma^\nu(s/t)$. The same argument applies with $\gamma$ replaced by $\delta$, and this verifies the claim which we made at the beginning of the proof.

Let us take $s/t$ as above, satisfying our claim, and for notational convenience let us replace $\gamma$ and $\delta$ by $\gamma^\nu$ and $\delta^\nu$, and $\Sigma$ by the semigroup generated by the new choices of $\gamma$ and $\delta$. The next part of the proof is somewhat technical, but the central idea is to find a point $y\in\A/\Q$ for which $\pi (y)$ is close to $\iota (s/t)$, and then multiply $y$ by a sequence of elements of $\Sigma$, selected with the aid of Proposition \ref{prop.close to 1}, to move away from $s/t$ in the Archimedean direction in very small steps. We continue on in small steps, controlling the $p$-adic directions so that they stay in $\Z_p$, until the Archimedean component of our point is close to an integer $N$ which is as large as we need. Then we translate everything back into a fundamental domain. The real components will remain in a sufficiently dense set in the real direction, and the $p$-adic components will simultaneously wind around enough in the $p$-adic directions to allow us to apply Proposition \ref{prop.eps dense in A/Q}.

Now we fill in the details of this argument. Let $M\ge 2$ be an even integer, for each prime $p$ let $k_p=k_p(M^{-1})$ be defined as in (\ref{eqn.k_p def}), and let $N=N(M^{-1})$ be defined as in (\ref{eqn.N eps def}). Then let
\[L=\mathrm{lcm}_{p\le M}\left(p^{k_p-1}(p-1)\right),\]
and apply Proposition \ref{prop.close to 1} to find integers $a\in\N$ and $b\in\Z$ with
\[1<\frac{\gamma^a}{\delta^b}<1+\frac{1}{LMN}.\]
Next set $\sigma=\gamma^a/\delta^b$ and let
\[K=\left\lceil 2\delta MN(\sigma-1)^{-1}\sigma^L\right\rceil.\]
If $b<0$ then choose $\gamma_0\in\Sigma$ to be any element satisfying $\gamma_0>1,$ and if $b>0$ choose $\gamma_0=\delta^{Kb}$. Then set
\[\gamma_i=\sigma^i\gamma_0~\text{ for }~ 1\le i\le K,\]
and note that each of these fractions is in $\Sigma$. Finally choose a point $y\in\Sigma\alpha$ in the fundamental domain
\begin{equation}\label{eqn.fund dom 3}
(-1,0]\times\prod_p\Z_p
\end{equation} satisfying
\begin{align}
\left| y_\infty-\frac{s}{t}\right|_\infty &<\frac{1}{2\gamma_0\sigma^LM},\quad\text{and}\label{eqn.y_inf upp bnd}\\
\left| y_p-\frac{s}{t}\right|_p&<\frac{\min\left(1,|\sigma|^{-K}_p\right)}{p^{k_p}}\quad\text{for all}\quad p\in\mathcal{P}.\label{eqn.y_p upp bnd}
\end{align}
By multiplying $y$ by a suitable power of $\delta$, which does not increase the $p$-adic absolute values of any of the $y_p$-coordinates, we may assume that we have chosen $y$ so that $y_\infty$ also satisfies the lower bound
\begin{equation}\label{eqn.y_inf low bnd}
\left| y_\infty-\frac{s}{t}\right|_\infty>\frac{1}{2\delta\gamma_0\sigma^LM}.
\end{equation}
We point out that this is where we are using the fact that $\alpha_\infty\not\in\Q$. Now for each $0\le i\le K$ define $y^{(i)}=\gamma_iy.$ If we write $y^{(0)}_\infty=s/t+x_\infty$ and $y^{(0)}_p=s/t+x_p$ for each $p\in\mathcal{P}$, then we have for each $0\le i\le K$ that
\begin{align*}
y^{(i)}_\infty&=\sigma^i\cdot\frac{s}{t}+\sigma^ix_\infty,\\
y^{(i)}_p&=\sigma^i\cdot\frac{s}{t}+\sigma^ix_p\quad\text{for}\quad p\in\mathcal{P},\quad\text{and}\\
y^{(i)}_p&=\sigma^iy^{(0)}_p\quad\text{for}\quad p\not\in\mathcal{P}.
\end{align*}
By our choice of $s/t$ there is a fraction $s'/t'$ which is in $\Z_p$ for all $p\not\in\mathcal{P}$ and which satisfies
\[\sigma\cdot\frac{s}{t}=\frac{s}{t}+\frac{s'}{t'}.\]
Noting that inequality (\ref{eqn.y_p upp bnd}) guarantees that $|\sigma^i x_p|_p<p^{-k_p}$ for all $p\in\mathcal{P}$ and $0\le i\le K$, we have after translating back into the fundamental domain (\ref{eqn.fund dom 3}) that
\begin{align*}
y^{(i)}_\infty&=\frac{s}{t}+\sigma^ix_\infty-n_i,\\
y^{(i)}_p&=\frac{s}{t}+\sigma^ix_p-n_i\quad\text{for}\quad p\in\mathcal{P},\quad\text{and}\\
y^{(i)}_p&=\sigma^iy^{(0)}_p-\frac{s'}{t'}\cdot\sum_{j=0}^{i-1}\sigma^j-n_i\quad\text{for}\quad p\not\in\mathcal{P},
\end{align*}
where
\[n_i=1+\left\lfloor\frac{s}{t}+\sigma^ix_\infty\right\rfloor.\]
Now by our lower bound (\ref{eqn.y_inf low bnd}) we have for any $0\le i< K$ that
\[|\sigma_{i+1}x_\infty-\sigma_ix_\infty|_\infty=|(\sigma-1)\sigma^ix_\infty|_\infty>\frac{(\sigma-1)}{2\delta\sigma^LM},\]
and by our choice for $K$ this implies that $\sigma^Kx_\infty>N$. On the other hand for any $i$ with $\sigma^i< N$ we have that
\[|\sigma_{i+1}x_\infty-\sigma_ix_\infty|_\infty<\frac{1}{LM}.\]
This together with our upper bound (\ref{eqn.y_inf upp bnd}) ensures that for any interval $\mathcal{I}\subseteq [0,N]$ of length $2/M$, there is an integer $0\le j\le K-L$ with the property that $\sigma^ix_\infty\in \mathcal{I}$ for all $j\le i\le j+L$. One of these integers must equal $0$ modulo $L$ and for this integer we have that
\begin{align*}
\sigma^i=1~\mathrm{mod}~p^{k_p}~\text{ for all}~p\not\in\mathcal{P},
\end{align*}
and therefore also that
\[\sum_{j=0}^{i-1}\sigma^j=\frac{\sigma^i-1}{\sigma-1}=0~\mathrm{mod}~p^{k_p}~\text{ for all}~p\not\in\mathcal{P}.\]

To finish the argument, partition the interval $(-1,0]$ into disjoint intervals of length $2/M$. By what we have said above, for each such interval $\mathcal{I}$ and for each integer $1\le n\le N$ there is an $0\le i\le K$ with $y_\infty^{(i)}\in\mathcal{I}$ and $|y_p^{(i)}-n|_p\le p^{-k_p}$ for all primes $p$. By Proposition \ref{prop.eps dense in A/Q} the set $\Sigma\alpha$ is $2/M$-dense in $\A/\Q$, and $M$ can be taken arbitrary large.
\end{proof}
Now we come to the proof of our main result.
\begin{proof}[Proof of Theorem \ref{thm.dense orbits}] We will show that if $\alpha\in\A/\Q$ satisfies $\alpha_\infty\not\in\Q$, then there is a rational $s/t$ satisfying the hypothesis of Proposition \ref{prop.rational accum pt 1}. Suppose, by way of contradiction, that this is not the case. As before write $\gamma=r/P$ with $(r,P)=1$. Then for any integer $v\ge 3$ which is coprime to $Pr\delta$, there exists an integer $u$ with $-v<u\le -1$ and $(u,v)=1$, for which $u/v$ has a purely $p$-adic expansion for all $p\in\mathcal{P}$. This follows from the same argument used to construct the fractions $\xi_i$ in the previous proof (cf. (\ref{eqn.periodic expan})). By replacing $\gamma$ and $\delta$ by suitable powers, as in the previous proof, we may assume that $\iota (u/v)$ is invariant under the action of $\Sigma$.

Define $Y_0\subseteq X_\mathcal{P}$ by
\[Y_0=\overline{\pi\left(\Sigma\alpha\right)},\]
and for each $i\in\N$ define $Y_i\subseteq Y_{i-1}$ by
\[Y_i=\{y\in Y_{i-1}:y+\iota (u/v)\in Y_{i-1}\}.\]
For any $i\in\N$ any for any  $y\in Y_i$ we have that
\[y+j\cdot\iota (u/v)\in Y_0 ~\text{ for all}~0\le j\le i.\]
Therefore if we can show that all of the sets $Y_i$ are non-empty then, since $i$ can be taken arbitrarily large, it will follow from Proposition \ref{eqn.N eps def} that the set $Y_0$ must be $1/v$-dense in $X_\mathcal{P}$. Since $Y_0$ is closed and $v$ can be taken arbitrarily large, this would mean that $Y_0=X_\mathcal{P}$, which would contradict our initial assumption and complete the proof.

All that remains is to show that the sets $Y_i$ are always non-empty, and we will do this by an inductive argument. It is clear that $Y_0$ is non-empty, closed, and $\Sigma$-invariant. Since $\alpha_\infty\not\in\Q$, it follows that $Y_0$ contains an accumulation point $y$ and a sequence of points $(y^{(j)})_{j=1}^\infty$ approaching $y$, with the property that $y^{(j)}_\infty\not=y_\infty$ for any $j$. This implies that there is a sequence of points $(x^{(j)})_{j=1}^\infty$ in the difference set
\[D_0:=Y_0-Y_0\]
with $x^{(j)}\rar 0$ and $x^{(j)}_\infty\not=0$ for any $j$. Since $D_0$ is closed, this allows us to apply the same argument as in the previous proposition to conclude that $D_0=X_\mathcal{P}$. We note that the property that $x^{(j)}_\infty\not=0$ for any $j$ is crucial here (cf. (\ref{eqn.y_inf low bnd})). Since $u/v\in D_0$ we conclude that $Y_1$ is non-empty.

Next we show that $Y_2$ is non-empty. The set $Y_1$ is non-empty, closed, and $\Sigma$-invariant (since $\iota(u/v)$ is $\Sigma$-invariant). If there is an element $y\in Y_1$ with $y_\infty\not\in\Q$ then the argument is exactly the same as before. If not, then choose any element $y\in Y_1$ and, considering it as an element of the fundamental domain (\ref{eqn.fund dom 2}), choose a prime $p\in\mathcal {P}$ with the property that $y_p\not=y_\infty$. This is possible since we know that $\iota (y_\infty)\not\in Y_0$. Then $y_p\in\Z_p$ and $y_\infty\in\Q\cap\Q_p$, and we can write their $p$-adic expansions as
\begin{align*}
y_\infty=\sum_{\ell=-\infty}^\infty b_\ell p^\ell\quad\text{and}\quad y_p=\sum_{\ell=0}^\infty c_\ell p^\ell,
\end{align*}
with $b_\ell=0$ for all $\ell$ less than some bound. Now write $\gamma=p^{-a}\gamma'$ with $|\gamma'|_p=1$, and set $z^{(i)}=p^{-ia}y$ and $y^{(i)}=\gamma^iy$. Working in the fundamental domain (\ref{eqn.fund dom 2}), we find that for all sufficiently large $i$,
\begin{align*}
|z^{(i)}_\infty|_p=p^{ia}\cdot\left|\sum_{\ell=-\infty}^{-1}b_{\ell}p^\ell+\sum_{\ell=0}^{ia-1}(b_{\ell}-c_{\ell})p^\ell\right|_p.
\end{align*}
Here we are using the fact that $b_\ell\not= c_\ell$ for some $\ell$, and this also implies that $|z^{(i)}_\infty|_p$ tends to infinity with $i$. Since $|\gamma'|_p=1$, and since translation by a $p$-adic integer (in order to bring all other coordinates back into the fundamental domain) does not change the $p$-adic absolute value of an element of $\Q_p\setminus\Z_p$, we have that $|y^{(i)}_\infty|_p$ tends to infinity with $i$. This means that there are infinitely many points in $Y_1$ whose representatives in the fundamental domain (\ref{eqn.fund dom 2}) have different Archimedean coordinates, and therefore there is a sequence of points $(x^{(j)})_{j=1}^\infty$ in the difference set $D_1:=Y_1-Y_1$ with $x^{(j)}\rar 0$ and $x^{(j)}_\infty\not=0$ for any $j$. The rest of the argument for showing that $Y_2$ is non-empty is exactly the same as before, and the same argument then shows that $Y_i$ is non-empty for all $i\in\N.$ This concludes the proof of Theorem \ref{thm.dense orbits}.
\end{proof}

\section{Examples of non-dense orbits}\label{sec.non dense}
Finally we give examples which illustrate various ways in which one can fail to have dense orbits. Firstly, if all the elements of $\Sigma$ are contractions on $\R$, then there is one obvious degeneracy.
\begin{prop}\label{prop.non-dense orbit 1}
Suppose that $\gamma,\delta\in\Q$ satisfy
\[\max\left(|\gamma|,|\delta|\right)<1,\]
let $\Sigma$ denote the multiplicative semigroup which they generate, and let $P$ be defined as in (\ref{eqn.P defn}). If $\alpha\in\A/\Q$ is any point with coordinates in the fundamental domain
\begin{equation}\label{eqn.fund dom 4}
[-1/2,1/2)\times\prod_p\Z_p
\end{equation}
which satisfies $\alpha_p=0$ for all $p~|P$, then all accumulation points $x$ of the set $\Sigma\alpha$ have $x_\infty=0$.
\end{prop}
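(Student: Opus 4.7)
The plan is to exploit that every element of $\Sigma$ is a strict Archimedean contraction, together with the hypothesis on the $p$-adic coordinates of $\alpha$ which prevents the orbit from ever leaving $\prod_p \Z_p$ at the finite places. Once one sees that $\sigma\alpha$ has $p$-adic coordinates in $\Z_p$ for every $\sigma\in\Sigma$ and every prime $p$, reducing $\sigma\alpha$ into the fundamental domain \eqref{eqn.fund dom 4} requires only a translation by an integer of the Archimedean coordinate, and the Archimedean coordinate itself is contracting to zero.

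First I would set $c:=\max(|\gamma|,|\delta|)<1$ and note that for any $\sigma=\gamma^a\delta^b\in\Sigma$ one has $|\sigma|_\infty\le c^{a+b}$. Consequently, for each fixed $N$ there are only finitely many $\sigma\in\Sigma$ with $|\sigma|_\infty\ge c^N$, so any sequence of pairwise distinct elements $\sigma_n\in\Sigma$ must satisfy $a_n+b_n\to\infty$, and hence $|\sigma_n|_\infty\to 0$.

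Next I would check that, for any $\sigma\in\Sigma$ and any prime $p$, the $p$-adic coordinate $(\sigma\alpha)_p$ lies in $\Z_p$ already before any reduction modulo $\Q$. Interpreting $P$ (as in \eqref{eqn.P defn}, extended in the obvious way to both generators) as the product of the primes at which some element of $\Sigma$ has $p$-adic absolute value exceeding $1$, the argument splits: if $p\mid P$ then $\alpha_p=0$ by hypothesis, so $(\sigma\alpha)_p=0\in\Z_p$; and if $p\nmid P$ then $|\sigma|_p\le 1$ while $\alpha_p\in\Z_p$, so $(\sigma\alpha)_p\in\Z_p$. Therefore to represent $\sigma\alpha$ in the fundamental domain $[-1/2,1/2)\times\prod_p\Z_p$ it suffices to subtract some integer $m_\sigma\in\Z$ from the Archimedean component.

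Finally, since $|\alpha_\infty|_\infty\le 1/2$, the bound $|\sigma\alpha_\infty|_\infty\le c^{a+b}/2$ shows that for all sufficiently large $a+b$ one has $\sigma\alpha_\infty\in(-1/2,1/2)$, so $m_\sigma=0$ and the representative of $\sigma\alpha$ in the fundamental domain has Archimedean coordinate exactly $\sigma\alpha_\infty$. Given any accumulation point $x$ of $\Sigma\alpha$, there is a sequence of pairwise distinct elements $\sigma_n\alpha$ converging to $x$; this forces $a_n+b_n\to\infty$, and so the Archimedean coordinates of the fundamental-domain representatives converge to $0$, yielding $x_\infty=0$. There is no real obstacle here, only the mild bookkeeping point of confirming that no rational translation is needed to reach the fundamental domain, which is where the assumption $\alpha_p=0$ for $p\mid P$ is essential.
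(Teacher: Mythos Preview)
Your proof is correct and takes essentially the same approach as the paper: show that $\sigma\alpha_p\in\Z_p$ for every prime $p$ and every $\sigma\in\Sigma$, so that no non-integer rational translation is ever needed to return to the fundamental domain, and then use $|\gamma|,|\delta|<1$ to conclude that the Archimedean coordinate of $\sigma\alpha$ tends to $0$. Your remark that $P$ must be read as involving the denominator primes of \emph{both} generators (not only $\gamma$, as a literal reading of \eqref{eqn.P defn} would give) is a useful clarification that the paper leaves implicit.
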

\begin{proof}
If $\alpha$ is any point which satisfies the hypotheses of this proposition then for any $\sigma\in\Sigma$ and for any prime $p$, we will have that $\sigma\alpha_p\in\Z_p$. However as $a$ and $b$ tend to infinity along any sequence in $\N$, we will have that
\[\gamma^a\delta^b\alpha_\infty\rightarrow 0,\]
so that $0$ is the only possible real coordinate of any accumulation point of $\Sigma\alpha$ in the fundamental domain (\ref{eqn.fund dom 4}).
\end{proof}
Secondly, there are some semigroups of contractions which give rise to orbits whose real coordinates become dense in fractal sets.
\begin{prop}
Let $\Sigma$ be the multiplicative semigroup generated by $1/4$ and $5/64$. There are points $\alpha\in\A/\Q$ with the property that
\[\overline{\left\{(\sigma\alpha)_\infty\in [0,1):\sigma\in\Sigma\right\}}\]
is a set of Hausdorff dimension equal to $1/2$.
\end{prop}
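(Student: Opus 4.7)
The plan is to exhibit $\alpha=(0;\eta,0,0,\ldots)\in\F$ for a carefully chosen $\eta\in\Z_2$ and to identify the closure of the real coordinates of its $\Sigma$-orbit with a Cantor-like subset of $[0,1)$ of Hausdorff dimension $1/2$.

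First I would compute the Archimedean coordinate of $\sigma\alpha$. For $\sigma=5^b/4^{a+3b}\in\Sigma$, one checks that $|\sigma|_p\le 1$ for every $p\ne 2$, so $\sigma\alpha_p\in\Z_p$ at each prime $p\ne 2$, and the only non-trivial adjustment in projecting $\sigma\alpha$ back into $\F$ occurs at the prime $2$. The required rational translation has the form $r=q(b,a)+n$ with $n\in\Z$ and
\[
q(b,a):=\frac{5^b\eta\bmod 2^{2a+6b}}{2^{2a+6b}}\in[0,1);
\]
since $\sigma\alpha_\infty=0$, the real coordinate of $\sigma\alpha$ in $\F$ equals $1-q(b,a)$ when $q(b,a)>0$, and $0$ otherwise. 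So the task reduces to choosing $\eta$ so that $\overline{\{q(b,a):a,b\ge 0\}}$ has Hausdorff dimension $1/2$.

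My proposal is to take $\eta$ with base-$4$ expansion $\eta=\sum_{k\ge 0}\xi_k 4^k$, $\xi_k\in\{0,1\}$. For $b=0$ we have $q(0,a)=0.\xi_{a-1}\xi_{a-2}\ldots\xi_0$ in base $4$, so $q(0,a)$ lies in the self-similar Cantor set $C=\{\sum c_k/4^k:c_k\in\{0,1\}\}$ of dimension $\log 2/\log 4=1/2$. Choosing $(\xi_k)$ so that every finite $\{0,1\}$-word appears in it makes the closure of $\{q(0,a):a\ge 0\}$ equal to $C$, which yields the lower bound $\dim\ge 1/2$.

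The hard part is the matching upper bound. Since $5=11_4$, multiplying by $5^b$ in base $4$ convolves the digits of $\eta$ with those of $5^b$ and introduces carries, so for a generic choice of $(\xi_k)$ the base-$4$ digits of $5^b\eta$ take values outside $\{0,1\}$ for $b\ge 1$; already at $b=1$, a normal $(\xi_k)$ produces base-$4$ digits $d_k=\xi_{k-1}+\xi_k\in\{0,1,2\}$ and a closure of dimension $\log 3/\log 4>1/2$. To recover $\dim\le 1/2$ I would refine $(\xi_k)$---for example via a block-structured or substitution-type sequence with carefully controlled sparsity---so that for every $b\ge 1$ the carry interactions in $5^b\eta$ remain localised and the closure $\overline{\{q(b,a):a\ge 0\}}$ sits in a set of Hausdorff dimension at most $1/2$, while $(\xi_k)$ retains enough density that the $b=0$ closure still fills $C$. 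This simultaneous density-and-sparsity balance is the central technical obstacle I expect to have to confront.
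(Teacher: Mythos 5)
Your reduction is correct as far as it goes: with $\alpha=(0;\eta,0,0,\ldots)$ the only adjustment in returning $\sigma\alpha$ to $\F$ does occur at the prime $2$, the real coordinate is $1-q(b,a)$ with $q(b,a)=(5^b\eta\bmod 2^{2a+6b})/2^{2a+6b}$, and your $b=0$ analysis gives the lower bound $\mathrm{dim}_{\mathrm{H}}\ge 1/2$. But the proof has a genuine gap, and you have named it yourself: the upper bound is never established. You exhibit no sequence $(\xi_k)$ and no mechanism for taming the carries produced by multiplication by $5^b$, for all $b$ simultaneously. There is also a further subtlety you do not address: even if for each fixed $b$ you could place $\{q(b,a):a\ge 0\}$ inside a set of dimension $1/2$, the object in the proposition is the \emph{closure of the union over all $b$}, and Hausdorff dimension is not preserved under closure (the countable stability of dimension applies to the union, not to its closure). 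So the plan, as written, is missing its hardest step and would need an additional uniformity argument even if that step were supplied.

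The paper avoids all of this by putting the complexity in the opposite coordinate. It takes $\alpha_2=1/3$ and $\alpha_\infty=x$ for a suitable $x\in[0,1)$. Since $1/3\equiv 3\ (\mathrm{mod}\ 4)$ and $5/3\equiv 23\ (\mathrm{mod}\ 64)$, multiplication by $1/4$ or by $5/64$, followed by translation back into $\F$, acts on the real coordinate as one of four explicit affine contractions $T_1,\dots,T_4$ and merely moves the $2$-adic coordinate within $\{1/3,2/3\}$. The closure of the real parts is therefore contained in the attractor $F$ of a finite iterated function system, whose dimension is computed by Hutchinson's formula (the containments $T_3([0,1))\subset T_2([0,1))$ and $T_4([0,1))\subset T_1([0,1))$ reduce the count to the two ratio-$1/4$ maps, giving $s=1/2$), and choosing $x$ with dense orbit under the $T_i$ gives equality. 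If you want to salvage your own route, the natural repair is to take $\eta=1/3$ and move the ``density'' requirement to $\alpha_\infty$ --- which is precisely the paper's construction.
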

\begin{proof}
Define affine contractions $T_1,T_2,T_3,T_4:[0,1)\rar [0,1)$ by setting
\begin{align*}
T_1(x):=x/4+1/4,\quad &T_2(x):=x/4+1/2\\
T_3(x):=5x/64+41/64,\quad&\text{and}\quad T_4(x):=5x/64+9/32.
\end{align*}
Then there exists a unique non-empty compact set $F$ such that $F=\bigcup_{i=1}^4 T_i(F)$ (see Theorem 9.1 in \cite{Falc1990}). Noting that $T_3([0, 1))\subset T_2([0, 1))$ and $T_4([0, 1))\subset T_1([0, 1))$, we may use Hutchinson's Formula (see Theorem 9.3 in \cite{Falc1990}) to calculate the Hausdorff dimension of the attractor $F$. We have that $\mathrm{dim}_{\mathrm{H}}(F)=s$, where $4^{-s}+4^{-s}=1$, that is, $s=1/2$.

Let $x\in [0,1)$ be a point whose orbit under the collection of maps $T_i$ is dense in $F$, and choose $\alpha\in\A/\Q$ to be any point in $\F$ with $\alpha_\infty=x$ and $\alpha_2=1/3$. The action of multiplying $\alpha$ (or any point in $\F$ with $2$-adic coordinate $1/3$) by $1/4$, and then translating back to $\F$, corresponds to the map $T_1$, extended coordinate-wise to $\A/\Q$. Also, the $2$-adic coordinate of the image is $1/3$.

Similarly, multiplication of any point in $\F$ with $2$-adic coordinate $1/3$ by $5/64$ corresponds to the map $T_3$, and the $2$-adic coordinate of the image is $2/3$. The same argument applies to points with $2$-adic coordinates $2/3$, and the resulting maps correspond to $T_2$ and $T_4$. Therefore the real part of the closure of the orbit of $\alpha$ under the semigroup $\Sigma$ equals $F$.
\end{proof}
Finally, if $\Sigma$ is generated by finitely many integers, then there will be other types of infinite non-dense orbits (of points $\alpha$ with $\alpha_\infty\in\Q$). For example the orbit of any point $\alpha\in\A/\Q$ with $\alpha_\infty=2/7$ and $\alpha_p\not=2/7$ for any $p$ under the action of the semigroup generated by $8$ and $729$ will be infinite, but all points in the closure will have real coordinate $2/7$. We point this out only in order to emphasize that the correct interpretation must be applied when using Berend's hypotheses, for example in \cite[Theorems II.1, IV.1]{Bere1984}.

\end{document}